\newtheorem{theorem}{Theorem}[section]
\newtheorem{lemma}[theorem]{Lemma}
\def\ifl{\iffalse }
\def\bc{\begin{center}}       \def\ec{\end{center}}
\def\ba{\begin{array}}        \def\ea{\end{array}}
\def\be{\begin{equation}}     \def\ee{\end{equation}}
\def\bea{\begin{eqnarray}}    \def\eea{\end{eqnarray}}
\def\beaa{\begin{eqnarray*}}  \def\eeaa{\end{eqnarray*}}
\numberwithin{equation}{section}
\newtheorem{remark}[theorem]{Remark}
\numberwithin{equation}{section}
\begin{document}

\title[Chemotaxis effect, logistic source, and boundedness]
{Chemotaxis effect vs logistic damping  on boundedness in the 2-D minimal Keller-Segel model}

\author{Hai-Yang Jin}
\address{School of Mathematics, South China University of Technology, Guangzhou 510640, China}
\email{mahyjin@scut.edu.cn}
\author{Tian Xiang}
\address{Institute for Mathematical Sciences, Renmin University of China, Bejing, 100872, China}
\email{txiang@ruc.edu.cn}

\subjclass[2000]{Primary: 	35K51,  35K55; Secondary: 92C17, 35B44, 35A01.}


\keywords{Chemotaxis effect, logistic damping,  qualitative boundedness,  global existence, blow-up.}

\begin{abstract}

We study  chemotaxis effect  vs logisticdamping on  boundedness for the two-dimensional  minimal Keller-Segel model with logistic source:
$$\left\{ \begin{array}{lll}
&u_t =  \nabla \cdot (\nabla  u-\chi u\nabla v)+ u-\mu u^2,  &\quad x\in \Omega, t>0, \\[0.2cm]
& v_t =\Delta v -v+u,  &\quad x\in \Omega, t>0 \end{array}\right.
$$
 in a  smooth bounded  domain  $\Omega\subset \mathbb{R}^2$  with  $\chi, \mu>0$, nonnegative initial data $u_0, v_0$ and homogeneous Neumann boundary data. It is well-known that this model allows only for global and uniform-in-time bounded solutions for any $\chi, \mu>0$. Here,  we  carefully employ a simple and new method to regain its  boundedness and,  with particular attention to how upper bounds of solutions  qualitatively depend on $\chi$ and $\mu$. More, precisely, it is shown  there exists $C=C(u_0, v_0, \Omega)>0$ such that
$$
\|u(\cdot,t)\|_{L^\infty(\Omega)}\leq C\Bigr[1+\frac{1}{\mu}+\chi K(\chi,\mu) N (\chi,\mu) \Bigr]
$$
and
$$
\|v(\cdot, t)\|_{W^{1,\infty}(\Omega)}\leq C\Bigr[1+\frac{1}{\mu}+\frac{\chi^\frac{8}{3}}{\mu}K^\frac{8}{3}(\chi,\mu)\Bigr]=:CN(\chi,\mu)
$$
 uniformly  on  $[0, \infty)$,  where
$$
K(\chi, \mu)=M(\chi,\mu)E(\chi,\mu), \quad M(\chi,\mu)=1+\frac{1}{\mu}+\sqrt{\chi}(1+\frac{1}{\mu^2})
$$
and
$$
E(\chi,\mu)=\exp\Bigr[\frac{\chi C_{GN}^2}{2\min\{1,\frac{2}{\chi}\}}\Bigr(\frac{4}{\mu}\|u_0\|_{L^1(\Omega)}+\frac{13}{2\mu^2}|\Omega|
+\|\nabla v_0\|^2_{{L^2}(\Omega)}\Bigr)\Bigr].
$$
We notice that these upper bounds  are increasing in $\chi$, decreasing in $\mu$ and have only one singularity at $\mu=0$, where  the corresponding minimal model (removing the term $+u-\mu u^2$ in the first equation) is widely  known to possess blow-ups for large initial data.
\end{abstract}

\maketitle

\section{Introduction and main results}
In this work, we are concerned with the well-known and extensively explored  Keller-Segel minimal chemotaxis model with logistic source:
 \be \label{ks-asc-pp}\begin{cases}
u_t =   \nabla \cdot (\nabla  u-\chi u\nabla v)+ ru-\mu u^2  &x\in \Omega, t>0, \\[0.2cm]
 v_t =  \Delta v- v+ u,  & x\in \Omega, t>0, \\[0.2cm]
\frac{\partial u}{\partial \nu}=\frac{\partial v}{\partial \nu}=0, & x\in \partial \Omega, t>0,\\[0.2cm]
u(x,0)=u_0(x)\geq 0,  v(x,0)= v_0(x)\geq 0, & x\in \Omega,    \end{cases}  \ee
where $\Omega \subset \mathbb{R}^n(n\geq 1)$ is a bounded smooth domain,  $r\geq 0,  \chi, \mu>0$ and  $u$ and $v$ respectively denotes the density of cells and the concentration of the chemical signal.  The  chemotactic flux   $-\chi  u \nabla v$ (defining term in chemotaxis models) models the directed movement that $u$ moves towards the higher concentration of  $v$. This is commonly termed as chemotactic movement, a biological phenomenon whereby biological individuals orient their movement in response to some external signaling substances which attract cells to aggregate.

  Without logistic source, i.e., $r=0, \mu=0$,   the system \eqref{ks-asc-pp} is known as  the classical Keller-Segel minimal model \cite{Ke}, which and whose variants have been widely investigated since 1970.  The striking  feature of KS type models is the possibility of blow-up of solutions in a finite/infinite time, which strongly depends on the space dimension. A finite/infinite  time blow-up never occurs in $1$-D  \cite{HP04, OY01, Xiang14},  a  critical  mass blow-up occurs in $2$-D: when the initial mass $\|u_0\|_{L^1}<4\pi/\chi$,  solutions exist globally and converge to a single equilibrium in large time, whereas,  when $\|u_0\|_{L^1}>4\pi/\chi$, there exist solutions  blowing up in finite time, cf. \cite{HW01,FLP07, NSY97, Na01, SS01}, and even small initial mass can result in  blow-ups in $\geq 3$-D \cite{Win10-JDE, Win13}. See  \cite{BBTW15, Ho1}  for more surveys on the classical KS model and its variants.

The logistic source was introduced  by Mimura and Tsujikawa \cite{MT96}, where they study aggregating patterns based on  the chemotaxis, diffusion and growth of bacteria. First, this additional logistic term apparently destroys the conservation law of mass of the  classical KS model. On the other hand, it exerts a certain growth-inhibiting influence on  the global existence and boundedness of solutions to the corresponding Keller-Segel models. Indeed, in the case $n=1,2$,  even arbitrarily  small $\mu>0$ will be enough to prevent blow-ups by guaranteeing all solutions to  \eqref{ks-asc-pp} are global-in-time and uniformly bounded \cite{ HP04, OY01, OTYM02, Xiangjde}. This is even  true for a $2$-D simplified version of parabolic-elliptic (the second PDE in \eqref{ks-asc-pp} is replaced with $ 0=\Delta v-v+u$) chemotaxis system with singular sensitivity \cite{FW14}.  Whereas, in the case $n\geq 3$,  the global existence and  boundedness  were first  obtained  for a parabolic-elliptic simplification of \eqref{ks-asc-pp} under $\mu>\frac{(n-2)}{n}\chi$ \cite{TW07}. Nowadays,  this result has been improved to the borderline case $\mu\geq \frac{n-2}{n}\chi$ \cite{HT16, KS16, WXpre}. Moreover, with a very slow self-diffusion of cells,  the $u$ component can exceed the carrying capacity $\frac{r}{\mu}$ to an arbitrary extent at some intermediate time \cite{La15-DCDS,Win14-JNS}. Coming back to our  parabolic mode \eqref{ks-asc-pp},  for $\Omega$ being convex, Winkler first derived  the boundeness and global existence provided that $\mu$ is beyond  a certain number $\mu_0$ not explicitly known \cite{Win10}. A further  progress in this regard  was derived as long as $\mu>\theta_0 \chi$ for some implicit positive constant  $\theta_0$ in  \cite{YCJZ15}. An explicit lower bound for a $3$-D chemotaxis-fluid system with logistic source, when applied to \eqref{ks-asc-pp} with $\chi=1$, which states that $\mu\geq 23$ is enough to ensure boundedness \cite{TWZAMP}. This  bound $\mu_0$ was further improved by  Lin and Mu  \cite{ML16}  in $3$-D, wherein they replaced the logistic source in \eqref{ks-asc-pp} by thedamping term $u-\mu u^r$ with $r\geq 2$ to derive the boundedness  under $\mu^{\frac{1}{r-1}} >20\chi $. Very recently, for a full-parameter version of \eqref{ks-asc-pp}, we calculate out the explicit formula for $\mu_0$ in terms of the involving parameters, which states that $\mu>\frac{9}{\sqrt{10}-2}\chi=(7.743416\cdots)\chi$ ensures  boundedness and global existence for \eqref{ks-asc-pp} in $3$-D \cite{Xiangpre2}. Yet, it is a big open challenging problem whether or not blow-up occurs in \eqref{ks-asc-pp}  for small $\mu>0$, even though the existence of global weak solutions is available in convex 3-D domains  for $\mu>0$ \cite{La15-JDE}. Under further conditions on $\chi, \mu$ or $r$, convergence of bounded solutions to  the constant equilibrium $(\frac{r}{\mu}, \frac{r}{\mu})$ as well as its convergence rates are available \cite{HZ16, ML16, Win14,  Xiangpre2}. It also needs to be mentioned that for certain choices of the parameters, the solutions of \eqref{ks-asc-pp} even may oscillate drastically in
time, as numerically illustrated in \cite{HP11}, and that the solutions may undergo  transient growth phenomena, as demonstrated in \cite{La15-DCDS,Win14-JNS, Win17-DCDSB}.

 In contrast to the rich knowledge on boundedness, convergence and other dynamical properties for \eqref{ks-asc-pp} and its variants,   understanding the qualitative or quantitative properties
even of bounded solutions to chemotaxis problems seems much less developed. In this direction, a work was considered by Tao and Winkler in \cite{TW15-JDE} to show the mass persistence phenomenon for \eqref{ks-asc-pp},   i.e, for any supposedly given global classical and bounded nontrivial solution $(u,v)$ of \eqref{ks-asc-pp},  there is $m_*>0$ such that $\|u(t)\|_{L^1}\geq m_*$ for all $t>0$.  To our best knowledge, there seems no work on how boundedness or upper bounds of solutions of \eqref{ks-asc-pp} depends on the system parameters, say, $\chi, \mu$ or $r$. In this paper, we aim as  a first step  to  study  chemotaxis effect  vs logistic damping on  boundedness for the minimal chemotaxis-logistic  model  \eqref{ks-asc-pp} in 2-D. We do so partially because all solutions in 2-D are global and bounded by \cite{OTYM02, Xiangjde}. We are particularly interested in  the dependence of  upper bounds of solutions to \eqref{ks-asc-pp} on the most interesting  parameters $\chi$ and $\mu$. We hope that this qualitative boundedness would stimulate new research directions, especially, the same problem in higher dimensions. Since the constant $r$ doesn't bother us much in our derivation, we include it here. With this goal in mind, our main qualitative boundedness result reads as follows:

\begin{theorem}\label{bdd-2d} Let   $\chi, \mu>0, r\geq 0$,   $\Omega\subset \mathbb{R}^2$ be a bounded domain with a smooth boundary and and let the initial data $u_0\in C(\overline{\Omega})$  and $v_0\in W^{1,\infty}(\Omega)$ be nonnegative. Then the  Keller-Segel  chemotaxis-logistic model  \eqref{ks-asc-pp}  has a  unique global classical nonnegative  solution $(u,v)$ on $\Omega\times [0, \infty)$ for which
\be\label{u-infty2}
\begin{split}
\|u(t)\|_{L^\infty(\Omega)}\leq C\Bigr[1+\frac{1}{\mu}+\chi K(\chi,\mu) N (\chi,\mu) \Bigr]=:CL(\chi,\mu)
 \end{split}
\ee
and
\be\label{v-w1infty2}
\|v(t)\|_{W^{1,\infty}(\Omega)}\leq C\Bigr[1+\frac{1}{\mu}+\frac{\chi^\frac{8}{3}}{\mu}K^\frac{8}{3}(\chi,\mu)\Bigr]=:CN(\chi,\mu)
\ee
 uniformly  on  $[0, \infty)$ and for some $C$ depending on $u_0,v_0, r$ and $|\Omega|$, where
\be\label{M-def}
K(\chi, \mu)=M(\chi,\mu)E(\chi,\mu), \quad M(\chi,\mu)=1+\frac{1}{\mu}+\sqrt{\chi}(1+\frac{1}{\mu^2})
\ee
and
\be\label{E-def}
E(\chi,\mu)=e^{\frac{\chi C_{GN}^2}{2\min\{1,\frac{2}{\chi}\}}\Bigr[\frac{(r+3)}{\mu}\|u_0\|_{L^1(\Omega)}+\frac{(r+1)^3}{4\mu^2}|\Omega|
+\|\nabla v_0\|_{{L^2}(\Omega)}^2+ \frac{(r+2)^2}{2\mu^2} |\Omega|\Bigr]}.
\ee
\end{theorem}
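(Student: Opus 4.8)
The plan is to run the classical three-tier route to boundedness for $2$-D Keller--Segel systems — first an $L^1$-bound on $u$, then an $L^2$- (equivalently $L\log L$-) bound on $u$, then a semigroup/Moser bootstrap to $L^\infty$ — but to carry out every estimate with the dependence on $\chi,\mu,r$ made explicit, so that at the end the constants can be packaged into $M$, $E$, $K$, $N$, $L$. Local existence, uniqueness, nonnegativity and the standard extensibility criterion (if $T_{\max}<\infty$ then $\|u(\cdot,t)\|_{L^\infty(\Omega)}\to\infty$) are furnished by the cited well-posedness theory, so everything reduces to an a priori estimate that is uniform on $[0,T_{\max})$, after which $T_{\max}=\infty$ follows automatically.

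\textbf{Mass and $\nabla v$ control.} Integrating the $u$-equation and using $\int_\Omega u^2\ge|\Omega|^{-1}(\int_\Omega u)^2$ gives an autonomous differential inequality for $t\mapsto\int_\Omega u$, hence a uniform bound $\|u(\cdot,t)\|_{L^1(\Omega)}\le m_0$ with $m_0$ explicit in $r,\mu,|\Omega|,\|u_0\|_{L^1}$; integrating the same identity over $(t,t+1)$ gives the space-time estimate $\int_t^{t+1}\!\int_\Omega u^2\le C(1+r)m_0/\mu$. Testing the $v$-equation by $-\Delta v$ yields
\[
\tfrac{d}{dt}\|\nabla v\|_{L^2}^2+\|\Delta v\|_{L^2}^2+2\|\nabla v\|_{L^2}^2\le\|u\|_{L^2}^2 ,
\]
and combining this with the space-time $L^2$-bound via the uniform Gronwall lemma produces a uniform bound on $\|\nabla v(\cdot,t)\|_{L^2}^2$ together with a space-time bound on $\|\Delta v\|_{L^2}^2$; the constant thus obtained is, up to harmless numerical factors, precisely the bracket $\mathcal{E}_0=\frac{r+3}{\mu}\|u_0\|_{L^1}+\frac{(r+1)^3}{4\mu^2}|\Omega|+\|\nabla v_0\|_{L^2}^2+\frac{(r+2)^2}{2\mu^2}|\Omega|$ occurring in the exponent of $E(\chi,\mu)$.

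\textbf{The crux: an $L^2$-bound on $u$.} Testing the $u$-equation by $u$ gives
\[
\tfrac12\tfrac{d}{dt}\|u\|_{L^2}^2+\|\nabla u\|_{L^2}^2+\mu\|u\|_{L^3}^3=\chi\!\int_\Omega u\,\nabla u\cdot\nabla v+r\|u\|_{L^2}^2 .
\]
I would split the chemotactic term by Young's inequality with weight $\min\{1,2/\chi\}$, so as to retain a $\chi$-uniform share of the dissipation $\|\nabla u\|_{L^2}^2$ while not squandering it when $\chi$ is large; what remains is a term of size $\frac{\chi}{2\min\{1,2/\chi\}}\int_\Omega u^2|\nabla v|^2$. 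Estimating this by the $2$-D Gagliardo--Nirenberg inequality — $\int_\Omega u^2|\nabla v|^2\le\|u\|_{L^4}^2\|\nabla v\|_{L^4}^2$, then interpolating $\|u\|_{L^4}^2$ between $\|\nabla u\|_{L^2}$ and $\|u\|_{L^1}$, and $\|\nabla v\|_{L^4}^2$ between $\|\Delta v\|_{L^2}$ and $\|\nabla v\|_{L^2}$ — lets the leftover dissipation swallow the top-order part and leaves a scalar inequality $y'\le a(t)y+b(t)$ for $y=\|u(\cdot,t)\|_{L^2}^2$, in which $\int_t^{t+1}b$ is controlled by the previous step while the time-integral of $a$ carries exactly the factor $\frac{\chi C_{GN}^2}{2\min\{1,2/\chi\}}\,\mathcal{E}_0$. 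A (uniform) Gronwall argument then yields $\sup_t\|u(\cdot,t)\|_{L^2}\lesssim M(\chi,\mu)E(\chi,\mu)=K(\chi,\mu)$, with the polynomial prefactor $M$ collecting the contributions of $b$ and of the $L^1$- and space-time bounds. This is the step I expect to be the real obstacle: one must choose the Young and Gagliardo--Nirenberg splittings so that (i) the diffusive and logistic terms genuinely absorb \emph{all} critical contributions for every $\chi,\mu>0$, with no hidden smallness hypothesis — the weight $\min\{1,2/\chi\}$ being the device that makes this uniform in the size of $\chi$ — and (ii) the leftover coefficient $a(t)$ is integrable in time with $\int_0^\infty a$ depending on the data only through $\mathcal{E}_0$; this is what makes $E(\chi,\mu)$ a genuinely time-independent quantity, and hence why the final bounds carry their single singularity at $\mu=0$.

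\textbf{Bootstrap to $L^\infty$.} Duhamel's formula for $v$ together with the Neumann heat-semigroup estimate $\|\nabla e^{\sigma(\Delta-1)}w\|_{L^q}\le C\sigma^{-1/2-(1/2-1/q)}e^{-\sigma}\|w\|_{L^2}$ upgrades the $L^2$-bound on $u$ to a bound on $\|\nabla v(\cdot,t)\|_{L^q}$ for a conveniently large finite $q$; feeding this back into a test of the $u$-equation by a suitable power of $u$ raises $u$ to $L^3$ and then to every $L^p$, and tracking the exponents produces $\|v(\cdot,t)\|_{W^{1,\infty}}\lesssim N(\chi,\mu)$ with the $\tfrac83$-power of $K$. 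A final variation-of-constants step for $u$ — in which the chemotactic drift contributes the extra factor $\chi\|\nabla v\|_{L^\infty}\lesssim\chi N$ — together with a Moser iteration then gives $\|u(\cdot,t)\|_{L^\infty}\lesssim\chi KN=L(\chi,\mu)$. Since all these bounds are uniform on $[0,T_{\max})$, the extensibility criterion forces $T_{\max}=\infty$, completing the proof.
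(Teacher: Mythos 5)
Your overall architecture is exactly the paper's: explicit $L^1$- and $\|\nabla v\|_{L^2}$-bounds plus space--time bounds on $\int\!\!\int u^2$ and $\int\!\!\int|\Delta v|^2$, then an ODI $y'\le a(t)y+b(t)$ for $y=\|u\|_{L^2}^2$ closed by a uniform-Gronwall/mean-value argument on unit time intervals, then the ``reciprocal'' lemma and semigroup estimates to climb to $L^3$ and $L^\infty$. The one substantive divergence is in the crux step, and it matters for the stated constants. You keep the chemotactic term as $\chi\int_\Omega u\,\nabla u\cdot\nabla v$, apply Young first, and then bound the leftover $\int_\Omega u^2|\nabla v|^2$ by $\|u\|_{L^4}^2\|\nabla v\|_{L^4}^2$ with a Gagliardo--Nirenberg interpolation of $\|\nabla v\|_{L^4}^2$ through $\|\Delta v\|_{L^2}\|\nabla v\|_{L^2}$. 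That route does prove boundedness, but the resulting coefficient is $a(t)\sim\chi\,C_{GN}^2\|\Delta v\|_{L^2}^2\|\nabla v\|_{L^2}^2/\min\{1,\tfrac2\chi\}$ (up to constants), so $\int_t^{t+1}a$ picks up an extra factor of $\sup_t\|\nabla v\|_{L^2}^2$ and your exponent is \emph{not} ``exactly'' $\frac{\chi C_{GN}^2}{2\min\{1,2/\chi\}}\mathcal{E}_0$ as you assert --- it is roughly quadratic rather than linear in the energy bracket, which would yield a larger $E(\chi,\mu)$ than the one in the theorem. The paper avoids this by first integrating by parts, $\frac{\chi}{2}\int_\Omega\nabla(u^2)\cdot\nabla v=-\frac{\chi}{2}\int_\Omega u^2\Delta v$, then applying H\"older against $\|\Delta v\|_{L^2}$ and Gagliardo--Nirenberg only to $\|u\|_{L^4}^2$; this makes $a(t)=\frac{\chi C_{GN}^2}{4\min\{1,2/\chi\}}\|\Delta v\|_{L^2}^2$ on the nose and produces the stated $E$. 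Two smaller remarks: your Young weight $\min\{1,2/\chi\}$ leaves a coefficient $\chi^2/(4\min\{1,2/\chi\})$, not $\chi/(2\min\{1,2/\chi\})$ (the latter arises naturally only after the integration by parts above); and no Moser iteration is needed at the end --- once $u\in L^3$ and $\nabla v\in L^{15}$ uniformly, a single application of $\|e^{\sigma\Delta}\nabla\cdot w\|_{L^\infty}\lesssim(1+\sigma^{-1/2-2/5})e^{-\lambda_1\sigma}\|w\|_{L^{5/2}}$ in the Duhamel formula for $u$ gives $\|u\|_{L^\infty}\lesssim\chi K N$ directly, which is how the prefactor $\chi KN$ in $L(\chi,\mu)$ is actually produced.
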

Up to a scaling constant,  Theorem \ref{bdd-2d} provides explicit upper bounds  for $\|u(t)\|_{L^\infty}$ and $\|v(t)\|_{W^{1,\infty}}$ in terms of  the most interesting parameters $\chi$ and $\mu$ in $2$-D.

The crucial point of the proof of Theorem \ref{bdd-2d}  consists in deriving a uniform-in-time estimate for $\|u(t)\|_{L^2}$ rather than $\|(u+1)\ln (u+1)\|_{L^1}$ as in \cite{OTYM02,  Xiangjde}; indeed, we obtain  an  explicit  uniform-in-time bound  for $\|u(t)\|_{L^2}^2$ as follows:
\be\label{ul2-est0}
\begin{split}
\|u(t)\|_{L^2}^2&\leq
E(\chi,\mu)\Bigr\{\|u_0\|_{L^2}^2+\frac{8\min\{1,\frac{2}{\chi}\}}{ C_{GN}^2}+\frac{(r+1)}{\mu}\|u_0\|_{L^1}\\
&\quad \quad +\frac{3\chi C_{GN}^2}{4}\Bigr[\|u_0\|_{L^1}+\frac{(r+1)^2}{4\mu}|\Omega|\Bigr]^4
+\frac{(r+1)^3}{4\mu^2}|\Omega|+\frac{8r^3}{9\mu^2}|\Omega|\Bigr\},
\end{split}
\ee
where $E$ is defined by \eqref{E-def} and  $C_{GN}$ is the Gagliardo-Nirenberg  constant.  After obtaining precise  bounds on
$\|u\|_{L^1}, \|\nabla v\|_{L^2}$ and space-time integrals on $u^2$ and $|\Delta v|^2$, cf. Lemmas \ref{ul1-vgradl2} and \ref{ul2-v del2-st}, we can use the 2-D  Gagliardo-Nirenberg interpolation inequality to derive a
differential inequality  for $y(t)=\| u(t)\|_{L^2}^2+a$ of the form:
$$
y^\prime (t)\leq ky(t)z(t)+b, \quad z(t)=\|\Delta v(t)\|_{L^2}^2
$$
for some $a, k, b>0$ and then solving this ODI successively and using the gained space-time bounds, we achieve the desired estimate \eqref{ul2-est0}. This is inspired by the ideas presented in \cite[Lemma 3.4]{STW14}. Thanks to the $L^{\frac{n}{2}+}$-boundedness criterion in \cite{BBTW15, Xiangjde},  the uniform-in-time bound for $\|u(t)\|_{L^2}$ indeed implies the global existence and  boundedness. While, to dig out the dependence of boundedness on $\chi$ and $\mu$, we first use the established $L^2$-estimate of $u$ together with a widely used 'reciprocal' lemma obtained from  the $v$-equation, cf, Lemma \ref{reciprocal-lem} to bound $\|\nabla v\|_{L^q}$ for any $q\in (1,\infty)$, and then,  we test the $u$-equation in \eqref{ks-asc-pp} by $u^2$ to derive the $L^3$-estimate of $u$, and finally, we apply the variation-of-constants formula for  $u$ and $v$ and use the well-known smoothing $L^p$-$L^q$ type estimates for the Neumann heat semigroup in $\Omega$, cf. \cite{Cao15, Win10-JDE} to conclude the respective bounds for $(u,v)$  in  \eqref{u-infty2} and \eqref{v-w1infty2}.

\begin{remark}From   \eqref{u-infty2}, \eqref{v-w1infty2}, \eqref{M-def} and \eqref{E-def},  one can see that $L, M, N, K, E$ defined on $[0,\infty)\times (0,\infty)$ are decreasing in $\mu$ and are increasing in $\chi$    have only one singularity at $\mu=0$.  Therefore,  our obtained bounds for  $\|u(t)\|_{L^\infty}$ and $\|v(t)\|_{W^{1,\infty}}$ enjoy these properties. It is worthwhile to observe that,   when $\mu=0$, even $r=0$, the corresponding KS minimal model possesses  blow-ups for large initial data \cite{HW01, NSY97, Na01, SS01}, illustrating the reasonableness of adding a logistic source to the KS minimal model to prevent blow-up.
\end{remark}

\section{Preliminaries}

For convenience, we start with  the well-known Young's inequality with $\epsilon$:
\begin{lemma}\label{Y-epsilon}(Young' s inequality with $\epsilon$) Let $p$  and $q$ be two given positive numbers with $\frac{1}{p}+\frac{1}{q}=1$. Then, for any  $\epsilon>0$, it holds
$$
ab\leq \epsilon a^p+\frac{b^q}{(\epsilon p)^{\frac{q}{p}}q},  \quad \quad \forall a,b\geq 0.
$$\end{lemma}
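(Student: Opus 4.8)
The plan is to reduce the weighted estimate to the classical (unweighted) Young's inequality
$$
ab\leq \frac{a^p}{p}+\frac{b^q}{q},\qquad \forall a,b\geq 0,
$$
and then to recover the prescribed $\epsilon$-dependence by a single rescaling of the two factors. The classical form is the natural first target because, once it is in hand, inserting the factor $\epsilon$ amounts only to absorbing a multiplicative constant into $a$ and its reciprocal into $b$; no further analysis is needed. Since $p$ and $q$ are conjugate, $\frac{1}{p}+\frac{1}{q}=1$ with both weights positive, which is exactly the hypothesis making the classical inequality available.

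For the classical inequality I would argue by concavity of the logarithm. The degenerate cases $a=0$ or $b=0$ are immediate since the right-hand side is nonnegative, so I may assume $a,b>0$. Then, applying the concavity of $\ln$ with the two weights $\frac{1}{p}$ and $\frac{1}{q}$ (which sum to $1$) to the points $a^p$ and $b^q$, I obtain
$$
\ln\!\Bigl(\frac{1}{p}\,a^p+\frac{1}{q}\,b^q\Bigr)\geq \frac{1}{p}\ln(a^p)+\frac{1}{q}\ln(b^q)=\ln a+\ln b=\ln(ab).
$$
Exponentiating and using monotonicity of the exponential yields $ab\leq \frac{a^p}{p}+\frac{b^q}{q}$, as desired. (Equivalently, one could invoke convexity of $t\mapsto e^t$, but the logarithmic version is the cleanest here.)

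To pass to the $\epsilon$-version, I would write $ab=(\alpha a)\,(\alpha^{-1}b)$ for a scaling parameter $\alpha>0$ to be chosen, and apply the classical inequality to the factors $\alpha a$ and $\alpha^{-1}b$:
$$
ab=(\alpha a)(\alpha^{-1}b)\leq \frac{(\alpha a)^p}{p}+\frac{(\alpha^{-1}b)^q}{q}=\frac{\alpha^p}{p}\,a^p+\frac{\alpha^{-q}}{q}\,b^q.
$$
The only point requiring care is selecting $\alpha$ so that the coefficient of $a^p$ equals $\epsilon$: setting $\alpha=(\epsilon p)^{1/p}$ gives $\alpha^p=\epsilon p$, hence $\frac{\alpha^p}{p}=\epsilon$, while $\alpha^{-q}=(\epsilon p)^{-q/p}$ turns the second coefficient into $\frac{1}{(\epsilon p)^{q/p}q}$. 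Substituting these back reproduces exactly
$$
ab\leq \epsilon a^p+\frac{b^q}{(\epsilon p)^{\frac{q}{p}}q},
$$
which completes the argument. There is no substantive obstacle in this proof; the entire content is the choice $\alpha=(\epsilon p)^{1/p}$, and the verification of the exponent bookkeeping is routine.
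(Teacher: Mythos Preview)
Your proof is correct: the reduction to the classical Young inequality via concavity of $\ln$, followed by the rescaling $\alpha=(\epsilon p)^{1/p}$, is a clean and complete argument. The paper itself does not supply a proof of this lemma---it is merely recorded as a well-known preliminary---so there is nothing to compare against, and your treatment is entirely appropriate.
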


\begin{lemma}\label{GN-inter}(Gagliardo-Nirenberg interpolation inequality \cite{Fried,  Nirenberg66}) Let $p\geq 1$ and  $q\in (0,p)$. Then there exist a positive constant   $C_{GN}$ depending on $p$ and $q$ such that
 $$
 \|w\|_{L^p} \leq C_{GN}\Bigr(\|\bigtriangledown w\|_{L^2}^{\delta}\|w\|_{L^q}^{(1-\delta) }+\|w\|_{L^s}\Bigr), \quad \forall w\in H^1\cap L^q,
 $$
where  $s>0$ is arbitrary and  $\delta$ is given by
 $$
 \frac{1}{p}=\delta(\frac{1}{2}-\frac{1}{n})+\frac{1-\delta}{q}\Longleftrightarrow \delta=\frac{\frac{n}{q}-\frac{n}{p}}{1-\frac{n}{2}+\frac{n}{q}}\in(0,1).
 $$
\end{lemma}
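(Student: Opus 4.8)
The plan is to deduce the inequality on the bounded domain $\Omega$ from its scale-invariant analogue on the whole space $\mathbb{R}^n$, the additive term $C_{GN}\|w\|_{L^s}$ being precisely the footprint left by the passage from $\mathbb{R}^n$ to $\Omega$ through a bounded extension operator. The heart of the matter is the whole-space estimate, and the value of $\delta$ will not have to be guessed: it is forced, on the one hand, by the requirement that each factor scale correctly under $w\mapsto w(\lambda\, \cdot\,)$ and, on the other, it drops out automatically from a H\"older interpolation, as I explain next.

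First I would establish, for $w\in H^1(\mathbb{R}^n)\cap L^q(\mathbb{R}^n)$, the purely multiplicative bound $\|w\|_{L^p(\mathbb{R}^n)}\le C\|\nabla w\|_{L^2(\mathbb{R}^n)}^{\delta}\|w\|_{L^q(\mathbb{R}^n)}^{1-\delta}$. The base case is the Sobolev embedding $\|w\|_{L^{2^*}}\le C_S\|\nabla w\|_{L^2}$ with $2^*=\frac{2n}{n-2}$ for $n\ge 3$ (and any finite exponent when $n=2$, the case relevant here), itself a consequence of the one-dimensional fundamental theorem of calculus applied in each coordinate direction together with an iterated (Loomis--Whitney) H\"older argument; I would simply invoke this classical fact. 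Writing $\frac1p=\frac{\theta}{2^*}+\frac{1-\theta}{q}$ and interpolating by H\"older gives $\|w\|_{L^p}\le\|w\|_{L^{2^*}}^{\theta}\|w\|_{L^q}^{1-\theta}\le C\|\nabla w\|_{L^2}^{\theta}\|w\|_{L^q}^{1-\theta}$, and since $\frac{1}{2^*}=\frac12-\frac1n$ one checks at once that $\theta$ equals the exponent $\delta$ of the statement. The admissibility range $q<p<2^*$ for this interpolation is exactly the range in which the defining relation returns $\delta\in(0,1)$, so nothing is lost.

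Next I would transfer this to $\Omega$. Since $\partial\Omega$ is smooth, there is a bounded linear extension operator $E$ that is simultaneously bounded from $W^{1,2}(\Omega)$ into $W^{1,2}(\mathbb{R}^n)$ and from $L^q(\Omega)$ into $L^q(\mathbb{R}^n)$. Applying the whole-space inequality to $Ew$ and restricting to $\Omega$ yields
\[
\|w\|_{L^p(\Omega)}\le\|Ew\|_{L^p(\mathbb{R}^n)}\le C\|\nabla Ew\|_{L^2(\mathbb{R}^n)}^{\delta}\|Ew\|_{L^q(\mathbb{R}^n)}^{1-\delta}\le C\bigl(\|\nabla w\|_{L^2(\Omega)}+\|w\|_{L^2(\Omega)}\bigr)^{\delta}\|w\|_{L^q(\Omega)}^{1-\delta},
\]
where the last step uses that $E$ controls the full norm $\|w\|_{W^{1,2}(\Omega)}$. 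Since $\delta\in(0,1)$, the elementary bound $(a+b)^{\delta}\le a^{\delta}+b^{\delta}$ splits the right-hand side into the desired main term $C\|\nabla w\|_{L^2}^{\delta}\|w\|_{L^q}^{1-\delta}$ plus a gradient-free term $C\|w\|_{L^2}^{\delta}\|w\|_{L^q}^{1-\delta}$, and by the logarithmic convexity of $L^p$-norms the latter is dominated by $C\|w\|_{L^{s_0}(\Omega)}$ with $\frac{1}{s_0}=\frac{\delta}{2}+\frac{1-\delta}{q}$.

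It then remains to replace $\|w\|_{L^{s_0}}$ by $\|w\|_{L^s}$ for an \emph{arbitrary} prescribed $s>0$, and this is where I expect the only genuine difficulty to lie. When $s\ge s_0$ it is immediate from H\"older on the bounded domain $\Omega$. When $s<s_0$ the two norms cannot be compared directly; here one must reach for the full strength of Nirenberg's argument: a dilation trick in which $\Omega$ is replaced by rescaled copies (reducing to unit cubes) and the explicit powers of the scaling parameter carried by the gradient term, the main term, and the subordinate term are tracked so as to interchange the admissible lower-order exponents, producing the lower-order norm at an arbitrary $L^s$. Carrying out this bookkeeping carefully, and ensuring that no term is absorbed circularly, is the delicate part; the whole-space inequality and the extension step are otherwise routine.
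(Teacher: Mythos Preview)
The paper does not prove this lemma at all: it is merely quoted from the literature (Friedman, Nirenberg) and used later in the $n=2$ case with $p=4$, $q=2$, $s=1$. So there is nothing to ``compare'' against; your write-up goes well beyond what the authors do.

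That said, your sketch contains a genuine gap precisely in the case the paper actually uses. You write that the base case is the Sobolev inequality $\|w\|_{L^{2^*}}\le C_S\|\nabla w\|_{L^2}$ ``(and any finite exponent when $n=2$, the case relevant here)'' and then H\"older-interpolate between $L^{2^*}$ and $L^q$. For $n\ge 3$ this is fine, but for $n=2$ there is \emph{no} inequality of the form $\|w\|_{L^r(\mathbb{R}^2)}\le C\|\nabla w\|_{L^2(\mathbb{R}^2)}$ for any finite $r$: the Dirichlet energy $\|\nabla w\|_{L^2(\mathbb{R}^2)}$ is dilation-invariant while $\|w\|_{L^r(\mathbb{R}^2)}$ is not, so a scaling $w\mapsto w(\lambda\,\cdot)$ immediately rules it out. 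Consequently the ``Sobolev plus H\"older'' route collapses in two dimensions, and the multiplicative estimate $\|w\|_{L^p(\mathbb{R}^2)}\le C\|\nabla w\|_{L^2}^{1-q/p}\|w\|_{L^q}^{q/p}$ must be obtained by a genuinely two-factor argument (e.g.\ the Ladyzhenskaya/Nash proof, which applies the fundamental theorem of calculus to $w^2$ in each coordinate direction and then uses Fubini and Cauchy--Schwarz). Your parenthetical remark does not supply this, and the Loomis--Whitney argument you allude to, as stated, yields only $W^{1,1}(\mathbb{R}^2)\hookrightarrow L^2(\mathbb{R}^2)$, not what you need.

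The remaining steps---extension to $\Omega$, splitting $(a+b)^\delta\le a^\delta+b^\delta$, and juggling the lower-order $L^s$ norm---are standard and essentially correct in outline, though the last of these (allowing arbitrary $s>0$, including $s<1$) you rightly flag as the place requiring Nirenberg's full scaling bookkeeping; note also that your extension-operator bound on $L^q$ tacitly assumes $q\ge 1$.
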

 The basic result on  local existence, uniqueness   and extendibility   of classical solutions for the  minimal KS  system \eqref{ks-asc-pp} can be found in \cite[Lemma 1.1]{Win10}.
\begin{lemma}Let  $\chi, \mu>0, r\geq 0$,   $\Omega\subset \mathbb{R}^n (n\geq 1)$ be a bounded smooth domain and let the initial data $u_0\in C(\overline{\Omega})$  and $v_0\in W^{1,\infty}(\Omega)$ be  nonnegative. Then there is a unique,  nonnegative and  classical maximal solution $(u,v)$ of the IBVP \eqref{ks-asc-pp}  on some maximal interval $[0, T_m)$ with $0<T_m \leq \infty$ such that
$$\ba{ll}
u\in C(\overline{\Omega}\times [0, T_m))\cap C^{2,1}(\overline{\Omega}\times (0, T_m)), \\[0.2cm]
v\in C(\overline{\Omega}\times [0, T_m))\cap C^{2,1}(\overline{\Omega}\times (0, T_m))\cap L_{\text{loc}}^\infty([0, T_m); W^{1,s}(\Omega))
\ea $$
for any $s>n$. In particular,  if $T_m<\infty$, then
$$ \|u( t)\|_{L^\infty}+\|v( t)\|_{W^{1,s}}\rightarrow \infty \quad \quad \mbox{ as }  t\rightarrow T_m^{-}.
$$
\begin{lemma}\label{ul1-vgradl2}  For any $t\in [0, T_m)$, the nonnegative  solution $(u,v)$ of \eqref{ks-asc-pp} satisfies
\be\label{u-bdd}
\|u\|_{L^1} \leq \|u_0\|_{L^1}+\frac{(r+1)^2}{4\mu}|\Omega|=:k_1\ee
and
\be\label{u-gradv-bdd}
 \|\nabla v\|_{L^2}^2\leq \frac{2}{\mu} \Bigr[\|u_0\|_{L^1}+\frac{\mu}{2}\|\nabla v_0\|_{L^2}^2+ \frac{(r+2)^2}{4\mu} |\Omega|\Bigr]=:k_2.
\ee
\end{lemma}
\begin{proof}The nonnegativity of $u,v$  follows from the maximum principle. Then integrating the $u$-equation and using the homogeneous Neumann boundary condition,  we derive
\be\label{ul1-diff}
\frac{d}{dt}\int_\Omega u= r \int_\Omega u-\mu \int_\Omega u^2\leq -\int_\Omega u+\frac{(r+1)^2}{4\mu}|\Omega|,
\ee
which yields the $L^1$-bound for $u$ in \eqref{u-bdd}.

Then testing the $v$-equation in  \eqref{ks-asc-pp} against $-\Delta v$ and integrating by parts and using Youn's inequality with epsilon as stated in Lemma \ref{Y-epsilon}, we obtain
\be\label{v-test}
\frac{1}{2}\frac{d}{dt}\int_\Omega |\nabla v|^2+\frac{1}{2}\int_\Omega |\Delta v|^2\leq -\int_\Omega |\nabla v|^2+\frac{1}{2}\int_\Omega u^2,
\ee
which together with the reasoning leading to \eqref{ul1-diff}  gives us
$$
 \frac{d}{dt}\int_\Omega (u+\frac{\mu}{2} |\nabla v|^2) +2\int_\Omega (u+\frac{\mu}{2}|\nabla v|^2)\leq \frac{(r+2)^2}{2\mu}|\Omega|.
$$
Solving this standard Gronwall inequality shows
$$
\|u\|_{L^1}+\frac{\mu}{2}\|\nabla v\|_{L^2}^2\leq  \|u_0\|_{L^1}+ \frac{\mu}{2}\|\nabla v_0\|_{L^2}^2+ \frac{(r+2)^2}{4\mu} |\Omega|,
$$
which directly leads to \eqref{u-gradv-bdd}.
\end{proof}
\end{lemma}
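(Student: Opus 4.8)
The plan is to obtain both estimates from elementary integral identities for the two equations in \eqref{ks-asc-pp}, never using the chemotactic term $-\chi u\nabla v$ except through its divergence form — which is exactly why the resulting constants $k_1,k_2$ will carry no $\chi$-dependence. First I would record that $u,v\ge 0$, which follows from the parabolic comparison principle applied to each equation separately (the coupling enters only through lower-order terms that respect signs). For \eqref{u-bdd}, integrate the $u$-equation over $\Omega$: the term $\int_\Omega\nabla\cdot(\nabla u-\chi u\nabla v)$ vanishes since the flux $\nabla u-\chi u\nabla v$ has zero normal trace on $\partial\Omega$ by the homogeneous Neumann conditions, so $\frac{d}{dt}\int_\Omega u=r\int_\Omega u-\mu\int_\Omega u^2$. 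The one genuinely useful step is the pointwise completion of the square $ru-\mu u^2=-u+\big[(r+1)u-\mu u^2\big]\le -u+\frac{(r+1)^2}{4\mu}$, which turns this into $\frac{d}{dt}\int_\Omega u\le-\int_\Omega u+\frac{(r+1)^2}{4\mu}|\Omega|$; an ODE comparison gives $\|u(t)\|_{L^1}\le\|u_0\|_{L^1}e^{-t}+\frac{(r+1)^2}{4\mu}|\Omega|(1-e^{-t})\le k_1$.

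For \eqref{u-gradv-bdd} I would test the $v$-equation against $-\Delta v$ and integrate by parts, using $\partial_\nu v=0$ (hence $\partial_\nu v_t=0$) so that $\int_\Omega v_t(-\Delta v)=\frac12\frac{d}{dt}\int_\Omega|\nabla v|^2$ and $\int_\Omega v(-\Delta v)=-\int_\Omega|\nabla v|^2$, and estimating the coupling term $-\int_\Omega u\Delta v$ by Young's inequality (Lemma \ref{Y-epsilon}) with weights $\tfrac12,\tfrac12$, to get $\frac12\frac{d}{dt}\int_\Omega|\nabla v|^2+\frac12\int_\Omega|\Delta v|^2\le-\int_\Omega|\nabla v|^2+\frac12\int_\Omega u^2$. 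Then I would add $\tfrac{\mu}{2}$ times this inequality to the $L^1$-identity for $u$: the bad term $+\tfrac{\mu}{2}\int_\Omega u^2$ is absorbed by $-\mu\int_\Omega u^2$, leaving $-\tfrac{\mu}{2}\int_\Omega u^2$, and using the analogous pointwise bound $(r+2)u-\tfrac{\mu}{2}u^2\le\frac{(r+2)^2}{2\mu}$ together with $-\mu\int_\Omega|\nabla v|^2=-2\cdot\tfrac{\mu}{2}\int_\Omega|\nabla v|^2$ yields, after discarding the nonnegative $\tfrac{\mu}{2}\int_\Omega|\Delta v|^2$, the inequality $\frac{d}{dt}\int_\Omega\big(u+\tfrac{\mu}{2}|\nabla v|^2\big)+2\int_\Omega\big(u+\tfrac{\mu}{2}|\nabla v|^2\big)\le\frac{(r+2)^2}{2\mu}|\Omega|$. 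Grönwall's lemma then gives $\|u(t)\|_{L^1}+\tfrac{\mu}{2}\|\nabla v(t)\|_{L^2}^2\le\|u_0\|_{L^1}+\tfrac{\mu}{2}\|\nabla v_0\|_{L^2}^2+\frac{(r+2)^2}{4\mu}|\Omega|$, and dropping the nonnegative $L^1$-term and multiplying by $2/\mu$ produces precisely $k_2$.

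I do not expect a real obstacle here; the only delicate point is bookkeeping. One must pick the weight $\tfrac{\mu}{2}$ — not an arbitrary constant — so that the logistic dissipation $-\mu\int_\Omega u^2$ exactly cancels the $u^2$-term generated by testing against $-\Delta v$ while leaving enough of it ($-\tfrac{\mu}{2}\int_\Omega u^2$) to run the square-completion against the source $(r+2)\int_\Omega u$, and one must check that the decay rate $2$ in the second Grönwall step is legitimate — it is, because the coefficient of $-\int_\Omega|\nabla v|^2$ is $1$, which under the weight $\tfrac{\mu}{2}$ becomes $-2\cdot(\tfrac{\mu}{2})\int_\Omega|\nabla v|^2$, matching the $-2\int_\Omega u$ coming from the linear part of $ru-\mu u^2$. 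Tracking these constants explicitly is what matters, since $k_1,k_2$ and their $\mu$-dependence feed directly into the estimates of Theorem \ref{bdd-2d}.
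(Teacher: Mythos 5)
Your proposal is correct and follows essentially the same route as the paper: integrate the $u$-equation and complete the square to get \eqref{u-bdd}, test the $v$-equation with $-\Delta v$, combine with the weight $\tfrac{\mu}{2}$ so the logistic dissipation absorbs the $\tfrac{\mu}{2}\int_\Omega u^2$ term, and apply Gr\"onwall. The only nitpick is the phrase ``add $\tfrac{\mu}{2}$ times this inequality'' (you in fact multiply \eqref{v-test} by $\mu$ to place the weight $\tfrac{\mu}{2}$ on $|\nabla v|^2$), but your subsequent bookkeeping is exactly right.
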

\section{Chemotaxis vs logistic  on Boundedness in 2-D}

In 2-D, it is well-known that any presence of logistic source  will be sufficient to suppress blow-up by ensuring all solutions to  \eqref{ks-asc-pp} are global-in-time and uniformly bounded \cite{OTYM02, Xiangjde}.   In this section, we carefully scrutinize a different  method motivated from \cite[Lemma 3.4]{STW14} to regain its boundness and,  with particular focus on  the qualitative dependence of upper bounds of  solutions to  \eqref{ks-asc-pp}     on $\chi$ and $\mu$, and thus accomplish the proof of Theorem \ref{bdd-2d}.

\begin{lemma}\label{ul2-v del2-st}  Given $\tau \in(0, T_m)$, then, for any $t\in [0, T_m-\tau)$, the solution $(u,v)$ of the KS model \eqref{ks-asc-pp} fulfills
\begin{equation}\label{ul2-st}
\int_t^{t+\tau }\int_\Omega u^2\leq \frac{(r+1)k_1}{\mu}\max\{\tau, 1\}=:k_3\max\{\tau, 1\},
\end{equation}
\be\label{gradv-l2-st}
 \int_t^{t+\tau}\int_\Omega |\nabla v|^2\leq k_2\max\{\tau, 1\}
\ee
and
\be\label{del v-st}
 \int_t^{t+\tau}\int_\Omega |\Delta v|^2 \leq (k_3+ k_2) \max\{\tau, 1\}=:k_4\max\{\tau, 1\}.
\ee
\end{lemma}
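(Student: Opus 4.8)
The plan is to squeeze all three space-time estimates out of the two differential relations already derived in the proof of Lemma~\ref{ul1-vgradl2}, namely the mass identity \eqref{ul1-diff} and the $H^1$-dissipation inequality \eqref{v-test}, simply by integrating them over an arbitrary time window $[t,t+\tau]$ and feeding in the pointwise-in-time bounds \eqref{u-bdd} and \eqref{u-gradv-bdd}.

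For \eqref{ul2-st} I would rewrite \eqref{ul1-diff} as $\mu\int_\Omega u^2=r\int_\Omega u-\frac{d}{dt}\int_\Omega u$ and integrate in time to get
\[
\mu\int_t^{t+\tau}\!\!\int_\Omega u^2=r\int_t^{t+\tau}\!\!\int_\Omega u+\int_\Omega u(\cdot,t)-\int_\Omega u(\cdot,t+\tau)\le r\int_t^{t+\tau}\!\|u(\cdot,s)\|_{L^1}\,ds+\|u(\cdot,t)\|_{L^1}.
\]
Since $\|u(\cdot,s)\|_{L^1}\le k_1$ for all $s\in[0,T_m)$ by \eqref{u-bdd}, the right-hand side is at most $rk_1\tau+k_1\le(r+1)k_1\max\{\tau,1\}$, and dividing by $\mu$ gives \eqref{ul2-st}. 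Estimate \eqref{gradv-l2-st} is then immediate: integrating the uniform bound \eqref{u-gradv-bdd} over $[t,t+\tau]$ yields $\int_t^{t+\tau}\int_\Omega|\nabla v|^2=\int_t^{t+\tau}\|\nabla v(\cdot,s)\|_{L^2}^2\,ds\le k_2\tau\le k_2\max\{\tau,1\}$.

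For \eqref{del v-st} I would integrate \eqref{v-test} over $[t,t+\tau]$, discard the nonpositive term $-\int_\Omega|\nabla v|^2$ and the nonnegative endpoint term $\frac12\|\nabla v(\cdot,t+\tau)\|_{L^2}^2$, to arrive at $\frac12\int_t^{t+\tau}\int_\Omega|\Delta v|^2\le\frac12\|\nabla v(\cdot,t)\|_{L^2}^2+\frac12\int_t^{t+\tau}\int_\Omega u^2$; bounding the first term by $\frac12 k_2$ via \eqref{u-gradv-bdd} and the second by $\frac12 k_3\max\{\tau,1\}$ via the just-proved \eqref{ul2-st} gives $\int_t^{t+\tau}\int_\Omega|\Delta v|^2\le k_2+k_3\max\{\tau,1\}\le(k_2+k_3)\max\{\tau,1\}=k_4\max\{\tau,1\}$. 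There is no genuine obstacle here; the only point needing a little care is the bookkeeping with $\max\{\tau,1\}$ — each additive constant that is \emph{not} multiplied by $\tau$ (the endpoint contributions $\|u(\cdot,t)\|_{L^1}$ and $\|\nabla v(\cdot,t)\|_{L^2}^2$) must be absorbed using $1\le\max\{\tau,1\}$, while the genuinely time-integrated pieces are controlled by $\tau\le\max\{\tau,1\}$.
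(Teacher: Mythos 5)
Your proposal is correct and follows essentially the same route as the paper: integrate the mass identity in time to get \eqref{ul2-st}, integrate the pointwise bound \eqref{u-gradv-bdd} to get \eqref{gradv-l2-st}, and integrate \eqref{v-test} while discarding the dissipative and endpoint terms to get \eqref{del v-st}. The bookkeeping with $\max\{\tau,1\}$ is handled exactly as in the paper's proof.
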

\begin{proof}For any $t\in[ 0, T_m-\tau)$, integrating the $u$-equation in \eqref{ks-asc-pp} over $\Omega\times(t,t+\tau)$ and using Lemma \ref{ul1-vgradl2}, we deduce that
$$
\mu \int_t^{t+\tau}\int_\Omega u^2\leq r\int_t^{t+\tau}\int_\Omega u+\int_\Omega u\leq (r+1)k_1\max\{\tau, 1\},
$$
yielding the desired inequality \eqref{ul2-st}.

The estimate \eqref{gradv-l2-st} follows directly from \eqref{u-gradv-bdd}. Next, an integration of \eqref{v-test}  over $(t, t+\tau)$ and the use of   \eqref{u-gradv-bdd}  and \eqref{ul2-st} telescope
\begin{align*}
 \int_t^{t+\tau}\int_\Omega |\Delta v|^2(s)\leq \int_t^{t+\tau} \int_\Omega u^2(s)+\int_\Omega |\nabla v|^2(t) \leq (k_3+ k_2) \max\{\tau, 1\},
\end{align*}
which is exactly  \eqref{del v-st}.
\end{proof}
 Here, with Lemma \ref{ul2-v del2-st} at hand, in 2-D setting, we can make use of the Gagliardo-Nirenberg interpolation inequality in Lemma \ref{GN-inter} to derive an ODI satisfied by $\|u\|_{L^2}^2$, which enables us to deduce an estimate  for  $\|u\|_{L^2}$. This  is the key  point for us to derive qualitative bounds  for $\|u\|_{L^\infty}$ and $\|v\|_{W^{1,\infty}}$ later on.
\begin{lemma}\label{ul2-est}  Given $\tau \in(0, T_m)$, then the $u$-component of the solution $(u,v)$ of the KS minimal  model  \eqref{ks-asc-pp} satisfies the explicit  uniform-in-time bound:
\be\label{ul2-est}
\begin{split}
\|u(t)\|_{L^2}^2&\leq
\Bigr\{\|u_0\|_{L^2}^2+\frac{8\min\{1,\frac{2}{\chi}\}}{ C_{GN}^2}+\frac{3\chi C_{GN}^2}{4}\Bigr[\|u_0\|_{L^1}+\frac{(r+1)^2}{4\mu}|\Omega|\Bigr]^4\\
&+\frac{(r+1)}{\mu}\|u_0\|_{L^1}+\frac{(r+1)^3}{4\mu^2}|\Omega|+\frac{8r^3}{9\mu^2}|\Omega|\Bigr\}\max\{1, \tau, \frac{1}{\tau}\}\\
&\times e^{\frac{\chi C_{GN}^2}{2\min\{1,\frac{2}{\chi}\}}\Bigr[\frac{(r+3)}{\mu}\|u_0\|_{L^1}+\frac{(r+1)^3}{4\mu^2}|\Omega|
+\|\nabla v_0\|_{L^2}^2+ \frac{(r+2)^2}{2\mu^2} |\Omega|\Bigr]\max\{1,\tau\}},
\end{split}
\ee
and so a uniform estimate for $\|u\|_{L^2}$ in terms of $\chi$ and $\mu$ follows:
\be\label{ul2-est-rough}
 \|u(t)\|_{L^2}\leq C\Bigr[1+\frac{1}{\mu}+\sqrt{\chi}(1+\frac{1}{\mu^2})\Bigr]\max\{\sqrt{\tau}, \frac{1}{\sqrt{\tau}}\}E^{\max\{1,\tau\}}(\chi,\mu)
\ee
for all $t\in (0, T_m)$ and for some $C=C(u_0, r,|\Omega|)$, where $E$ is defined by \eqref{E-def}.
\end{lemma}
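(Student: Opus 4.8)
The plan is to establish the ODI $y'(t)\le k\,y(t)z(t)+b$ for $y(t)=\|u(t)\|_{L^2}^2+a$ with $z(t)=\|\Delta v(t)\|_{L^2}^2$, and then integrate it in the style of \cite[Lemma 3.4]{STW14}, exploiting the space-time bounds already secured in Lemma \ref{ul2-v del2-st}. First I would test the $u$-equation in \eqref{ks-asc-pp} against $u$, integrate by parts, and use the chemotactic term $\chi\int_\Omega u^2\Delta v$ (after moving one derivative, $\nabla\cdot(\chi u\nabla v)$ tested against $u$ gives $-\chi\int_\Omega u\nabla u\cdot\nabla v=\frac{\chi}{2}\int_\Omega u^2\Delta v$). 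The diffusion term gives $-\int_\Omega|\nabla u|^2$ and the logistic term gives $r\int_\Omega u^2-\mu\int_\Omega u^3$. So one arrives at
\be\label{ul2-diff-sketch}
\frac{1}{2}\frac{d}{dt}\int_\Omega u^2+\int_\Omega|\nabla u|^2+\mu\int_\Omega u^3= \frac{\chi}{2}\int_\Omega u^2\Delta v+r\int_\Omega u^2.
\ee
To treat $\frac{\chi}{2}\int_\Omega u^2\Delta v$ I would apply Young's inequality (Lemma \ref{Y-epsilon}) to split it as $\le\frac{\chi}{2}\bigl(\varepsilon\int_\Omega u^4+C_\varepsilon\int_\Omega|\Delta v|^2\bigr)$ — but more precisely I expect the paper keeps the factor $z(t)=\int_\Omega|\Delta v|^2$ multiplying $\int_\Omega u^2$, since the 2-D Gagliardo-Nirenberg inequality $\|u\|_{L^4}^2\le C_{GN}^2(\|\nabla u\|_{L^2}\|u\|_{L^2}+\|u\|_{L^2}^2)$ (the case $p=4$, $q=2$, $n=2$, $\delta=\frac12$ in Lemma \ref{GN-inter}) lets one bound $\int_\Omega u^2|\Delta v|\le\|u\|_{L^4}^2\|\Delta v\|_{L^2}$ and then absorb $\|\nabla u\|_{L^2}^2$ into the good dissipation term $\int_\Omega|\nabla u|^2$ on the left, at the cost of a term $\sim\chi C_{GN}^2\,\|u\|_{L^2}^2\,\|\Delta v\|_{L^2}^2$ — this is the source of the factor $k=\frac{\chi C_{GN}^2}{2\min\{1,2/\chi\}}$ appearing in $E(\chi,\mu)$. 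The $\min\{1,2/\chi\}$ reflects how much of the $-\int_\Omega|\nabla u|^2$ one must spend depending on whether $\chi$ is large or small. The lower-order terms $r\int_\Omega u^2$ and the $u^4$-remainder would be dominated using $\mu\int_\Omega u^3$ (via Young again, giving the $r^3/\mu^2$, $(r+1)^3/\mu^2$ type constants) and the $L^1$-bound $k_1$ from Lemma \ref{ul1-vgradl2} (giving the $[\,\|u_0\|_{L^1}+\tfrac{(r+1)^2}{4\mu}|\Omega|\,]^4$ term via Gagliardo-Nirenberg interpolating $\|u\|_{L^4}^4$ between $\|\nabla u\|_{L^2}$ and $\|u\|_{L^1}$).

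After collecting terms the inequality should read $y'(t)\le k\,z(t)\,y(t)+b$ with $a$ chosen (something like $a=\frac{8\min\{1,2/\chi\}}{C_{GN}^2}$, matching the displayed formula) so that the constant term absorbs cleanly, $b$ a $\chi,\mu$-explicit constant collecting the $L^1$-bound and logistic contributions, and $z$ satisfying $\int_t^{t+\tau}z\le k_4\max\{\tau,1\}$ by \eqref{del v-st}. Then I would invoke the ODE comparison argument: from $y'\le k z y+b$ one gets, for $s\in[t,t+\tau]$,
\be\label{odi-int-sketch}
y(s)\le y(t)\,e^{k\int_t^{s}z}+b\int_t^{s}e^{k\int_\sigma^{s}z}\,d\sigma\le\bigl(y(t)+b\tau\bigr)e^{k\int_t^{t+\tau}z},
\ee
so $\sup_{[t,t+\tau]}y\le(y(t)+b\tau)e^{k k_4\max\{\tau,1\}}$. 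The standard trick (as in \cite{STW14}) is to also integrate the ODI over one window to get a uniform-in-time average bound on $y$: integrating $y'\le kzy+b$ and using $\int_t^{t+\tau}y\le(\text{window-sup of }y)\cdot\tau$ together with the $L^2$-space-time bound $\int_t^{t+\tau}\int_\Omega u^2\le k_3\max\{\tau,1\}$ from \eqref{ul2-st}, one bounds $y(t+\tau)$ in terms of $\int_t^{t+\tau}\int_\Omega u^2$ plus controlled quantities, hence gets a bound on $y(t+\tau)$ uniform in $t$; chaining this over consecutive windows of length $\tau$ yields the uniform-in-time bound \eqref{ul2-est}, with the $\max\{1,\tau,1/\tau\}$ coming from the different roles of $\tau$ (window length in the exponent, and $1/\tau$ from converting a space-time average into a pointwise bound at one time inside the window). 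Finally \eqref{ul2-est-rough} follows from \eqref{ul2-est} by taking square roots and reading off the leading $\chi,\mu$-dependence: the prefactor bracket $\{\cdots\}$ in \eqref{ul2-est} is $O\!\bigl(1+\tfrac1{\mu}+\chi(1+\tfrac1{\mu^2})+\tfrac1{\mu^2}\bigr)$ times $E$, and $\sqrt{\,\cdot\,}$ gives $C[1+\tfrac1\mu+\sqrt\chi(1+\tfrac1{\mu^2})]$, while the exponential $E^{\max\{1,\tau\}}$ is kept as is.

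The main obstacle will be the bookkeeping in making the chemotactic term absorb correctly: one must apply Gagliardo-Nirenberg to $\|u\|_{L^4}^2$, then Young's inequality with a sharp $\varepsilon$ so that $\|\nabla u\|_{L^2}^2$ is absorbed into the (coefficient-$1$) dissipation $\int_\Omega|\nabla u|^2$ while simultaneously the $u^4$-remainder coming from both the chemotactic term and the $\|u\|_{L^4}^2$ expansion is absorbed into $\mu\int_\Omega u^3$ — this is where the $\min\{1,2/\chi\}$ and the exact numerical constants ($8$, $3/4$, $(r+1)^3/4$, $8r^3/9$) are forced, and getting the algebra right so that everything is explicit in $\chi,\mu$ (with the claimed monotonicity and single singularity at $\mu=0$) is the delicate part. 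A secondary subtlety is justifying the window-chaining argument rigorously: one needs $T_m-\tau>0$ i.e. $\tau<T_m$, and the final uniform bound must be shown to hold on all of $(0,T_m)$ independent of $t$, which also feeds into the global existence claim of Theorem \ref{bdd-2d} via the $L^{n/2+}$-criterion cited from \cite{BBTW15, Xiangjde}. Everything else — Young, Gagliardo-Nirenberg, Gronwall — is routine once the ODI is in hand.
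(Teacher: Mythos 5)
Your proposal follows essentially the same route as the paper: test the $u$-equation by $u$, apply Gagliardo--Nirenberg to $\|u\|_{L^4}^2$ (with the additive term taken in $L^1$ so that Lemma \ref{ul1-vgradl2} controls it) and Young's inequality with $\epsilon=\min\{1,\frac{2}{\chi}\}$ to reach the ODI $y'\le k\,y z+b$, and then restart the Gronwall integration at a mean-value time $s_i$ in each window $[i\tau,(i+1)\tau]$ furnished by the space-time bounds of Lemma \ref{ul2-v del2-st}. Apart from minor slips that do not affect the argument (the sign of the chemotactic term, writing $\|u\|_{L^2}^2$ instead of $\|u\|_{L^1}^2$ as the additive term in your displayed GN inequality, and the exponent actually requiring $\int z$ over two consecutive windows, whence the factor $2k_5k_4$ in the paper), this is exactly the paper's proof.
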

\begin{proof} We test the  $u$- equation in \eqref{ks-asc-pp} by $u$  and integrate  by parts to deduce from H\"{o}lder's  inequality  that
\be \label{u-lp-int}
\begin{split}
\frac{1}{2}\frac{d}{dt} \int_\Omega u^2+ \int_\Omega |\nabla u |^2& =\frac{\chi}{2} \int_\Omega\nabla (u^2) \nabla v+\int_\Omega u^2(r-\mu u)\\
&\quad =-  \frac{\chi}{2}\int_\Omega  u^2 \Delta  v +\int_\Omega u^2(r-\mu u) \\
&\quad \leq \frac{ \chi}{2} \Bigr(\int_\Omega  u^{4}\Bigr)^{\frac{1}{2}}  \Bigr(\int_\Omega  |\Delta  v|^2 \Bigr)^{\frac{1}{2}} +\int_\Omega u^2(r-\mu u).   \end{split}
\ee
 Applying the  GN interpolation inequality  in Lemma \ref{GN-inter} with $n=2$ and  the boundedness of $\|u\|_{L^1}$ in  \eqref{u-bdd}, we estimate
\begin{align*}
\Bigr(\int_\Omega  u^{4}\Bigr)^{\frac{1}{2}}=\|u \|_{L^4}^2&\leq C_{GN}\Bigr(\|\nabla u\|_{L^2}\|u\|_{L^2}+\|u\|_{L^1}^2\Bigr)\\
& \leq C_{GN}\Bigr(\|\nabla u\|_{L^2}\|u \|_{L^2}+k_1^2\Bigr).
\end{align*}
Hence, upon twice uses   of Young's inequality with epsilon, cf. Lemma \ref{Y-epsilon}, for any $\epsilon>0$,  it follows that
\be\label{GN-Young-bdd-2d}
\begin{split}
&\Bigr(\int_\Omega  u^{4}\Bigr)^{\frac{1}{2}}  \Bigr(\int_\Omega  |\Delta  v|^2 \Bigr)^{\frac{1}{2}} \\
&\leq  C_{GN} \|\nabla u\|_{L^2}\|u\|_{L^2} \|\Delta v\|_{L^2}+k_1^2C_{GN}\|\Delta v\|_{L^2}\\
&\leq \epsilon \|\nabla u\|_{L^2}^2+\frac{C_{GN}^2}{4\epsilon} \|u\|_{L^2}^2\|\Delta v\|_{L^2}^2 +\|\Delta v\|_{L^2}^2+\frac{k_1^{4}C_{GN}^2}{4}.
\end{split}
\ee
Inserting \eqref{GN-Young-bdd-2d} into \eqref{u-lp-int},  we conclude that
\begin{align*}
&\frac{1}{2}\frac{d}{dt} \int_\Omega u^2+ \int_\Omega |\nabla u|^2\\
&\leq  \frac{\chi}{2}\Bigr[\epsilon \|\nabla u\|_{L^2}^2+\frac{C_{GN}^2}{4\epsilon} \|u\|_{L^2}^2\|\Delta v\|_{L^2}^2+\|\Delta v\|_{L^2}^2+\frac{k_1^{4}C_{GN}^2}{4}\Bigr]\\
&\quad +\int_\Omega u^2(r-\mu u),    \end{align*}
from which, upon setting
\be\label{episilon-exp}
\epsilon=\min\{1,\frac{2}{\chi}\},
\ee
we deduce
\be\label{ul2-2d}
\begin{split}
\frac{d}{dt} \int_\Omega u^2&\leq \frac{\chi C_{GN}^2}{4\epsilon} \Bigr(\|u\|_{L^2}^2+\frac{4\epsilon}{C_{GN}^2} \Bigr) \|\Delta v\|_{L^2}^2+\frac{\chi k_1^{4}C_{GN}^2}{4}+\frac{8r^3}{27\mu^2}|\Omega|\\
&=: k_5y(t)z(t)+k_6,  \end{split}
\ee
 where  $k_5=\frac{\chi C_{GN}^2}{4\epsilon}$, $k_6=\frac{\chi k_1^{4}C_{GN}^2}{4}+\frac{8r^3}{27\mu^2}|\Omega|$ and
\begin{equation}\label{yz}
y(t)=\|u\|_{L^2}^2+\frac{4\epsilon}{C_{GN}^2} ,\ \ \  z(t)= \|\Delta v\|_{L^2}^2.
\end{equation}
For any $s\geq 0$ and any $t\geq s$, multiplying the integrating factor $\exp(-k_5\int_s^tz(\lambda)d\lambda)$ on both sides of  \eqref{ul2-2d},  we deduce that
\be\label{GW-2d}
y(t)\leq y(s)e^{k_5\int_s^tz(\sigma)d\sigma}+
k_6\int_s^te^{k_5\int_\xi^tz(\sigma)d\sigma}d\xi, \quad \forall t\in [s,\infty)\cap [0, T_m).
\ee
In view of \eqref{ul2-st} and \eqref{del v-st} in Lemma \ref{ul2-v del2-st} and the mean value theorem, one infers from the definitions of $y$ and $z$ in \eqref{yz} that
\be\label{MVT}
y(s_i)=\frac{1}{\tau}\int_{i\tau}^{(i+1)\tau} y(s)ds\leq (k_3+\frac{4\epsilon}{ C_{GN}^2})\max\{1, \frac{1}{\tau}\}=:k_7\max\{1, \frac{1}{\tau}\}
\ee
and
\be\label{MVT2}
 \int_{i\tau}^{(i+1)\tau} z(s)ds\leq k_4\max\{\tau, 1\}
\ee
for some $s_i\in [i\tau, (i+1)\tau]$ and any nonnegative  integers $i<\frac{T_m}{\tau}-1$.

First, for $t\in [0, \tau]$, we set $s=0$ in \eqref{GW-2d} and $i=0$ in \eqref{MVT2} to infer
\be \label{y-tau}
\begin{split}
y(t)&\leq y(0)e^{k_5\int_0^\tau z(\sigma)d\sigma}+
k_6\int_0^\tau e^{k_5\int_0^\tau z(\sigma)d\sigma}d\xi\\
&\leq  (y(0)+k_6)\max\{\tau, 1\}e^{k_5 k_4\max\{\tau, 1\}}.
\end{split}
\ee
Next, for $t\in [\tau, 2\tau]$, we will always assume that $t<T_m$, we put $s=s_0\in [0,\tau]$ in  \eqref{GW-2d}  to deduce from \eqref{MVT} and \eqref{MVT2}  that
\be \label{y-2tau}
\begin{split}
y(t)&\leq y(s_0)e^{k_5\int_{s_0}^t z(\sigma)d\sigma}+
k_6\int_{s_0}^t e^{k_5\int_{s_0}^t z(\sigma)d\sigma}d\xi\\
&\leq y(s_0)e^{k_5\int_{0}^{2\tau}z(\sigma)d\sigma}+
k_6\int_{0}^{2\tau} e^{k_5\int_{0}^{2\tau} z(\sigma)d\sigma}d\xi\\
&\leq  (k_7+2k_6)\max\{1, \tau, \frac{1}{\tau}\}e^{2k_5 k_4\max\{\tau, 1\}}.
\end{split}
\ee
In general,   for any $t\in(\tau, T_m)$, one first chooses $i\geq 0$ such that $t\in[(i+1)\tau, (i+2)\tau]$ and set $s=s_i\in [i\tau, (i+1)\tau]$  in \eqref{GW-2d}, and then infers from \eqref{MVT} and \eqref{MVT2}  that
\be \label{y-ntau}
\begin{split}
y(t)&\leq y(s_i)e^{k_5\int_{s_i}^t z(\sigma)d\sigma}+
k_6\int_{s_i}^t e^{k_5\int_{s_i}^t z(\sigma)d\sigma}d\xi\\
&\leq y(s_i)e^{k_5\int_{i\tau }^{(i+2)\tau}z(\sigma)d\sigma}+
k_6\int_{i\tau}^{(i+2)\tau} e^{k_5\int_{i\tau}^{(i+2)\tau} z(\sigma)d\sigma}d\xi\\
&\leq  (k_7+2k_6)\max\{1, \tau, \frac{1}{\tau}\}e^{2k_5 k_4\max\{\tau, 1\}}.
\end{split}
\ee
 Recalling from  the definition of $y(t)$ in \eqref{yz}, we then conclude  from \eqref{y-tau}, \eqref{y-2tau} and \eqref{y-ntau}  the uniform $L^2$-estimate of $u$:
\begin{align*}
\|u\|_{L^2}^2+\frac{4\epsilon}{C_{GN}^2}&\leq (y(0)+k_7+3k_6)\max\{1, \tau, \frac{1}{\tau}\}e^{2k_5 k_4\max\{\tau, 1\}}\\
&=
\Bigr(\|u_0\|_{L^2}^2+\frac{8\min\{1,\frac{2}{\chi}\}}{ C_{GN}^2}+k_3+\frac{3\chi k_1^4 C_{GN}^2}{4}
+\frac{8r^3}{9\mu^2}|\Omega|\Bigr)\\
&\quad \times  \max\{1, \tau, \frac{1}{\tau}\}e^{\frac{\chi C_{GN}^2}{2\min\{1,\frac{2}{\chi}\}}(k_3+ k_2)\max\{1,\tau\}},
\end{align*}
where we have substituted the definitions of  $k_4,k_5, k_6, k_7$ and $\epsilon$ in \eqref{del v-st}, \eqref{ul2-2d}, \eqref{MVT} and \eqref{episilon-exp}. In the above inequality, a further substitution of $k_1,k_2,k_3$ as defined in \eqref{u-bdd}, \eqref{u-gradv-bdd} and \eqref{ul2-st} yields the desired $L^2$-estimate of $u$ in  \eqref{ul2-est}.
\end{proof}
\begin{remark}\label{global-time} Another way to view the uniform $L^2$-norm of $u$ could be arguing as follows: Assume $T_m<\infty$. Then, for any given  large  natural number $N\gg 1$,  we set  $\tau=\frac{T_m}{N}$ so that $N\tau=T_m$. Then as arguing above we can obtain  that $\|u(t)\|_{L^2}$ is uniformly bounded in $(0, T_m)$, which violates the $L^{\frac{n}{2}+}$-criterion in \cite{BBTW15, Xiangjde} with $n=2$. Hence, $T_m=\infty$ and  $\|u(t)\|_{L^\infty}$ is uniformly bounded on $(0, \infty)$.  Furthermore,  this energy  method offers a simple proof for global-in-time boundedness in 2-D setting compared to existing literature, c.f. \cite{OTYM02, Xiangjde}.
\end{remark}
In the sequel,  we shall seek how the $(L^\infty, W^{1,\infty})$-bound of $(u,v)$  depends on $\chi$ and $\mu$. Since the solution $(u,v)$ is global in time by Remark \ref{global-time}, we will set $\tau=1$ to simplify our calculations. To get higher order regularity of $u$, we control the $W^{1, q}$-bounds of $v$ in terms of $L^p$-norms of $u$. For this purpose, we shall utilize the widely known smoothing $L^p$-$L^q$ properties of the Neumann heat semigroup $\{e^{t\Delta}\}_{t\geq0}$ in $\Omega$, see, e.g. \cite{Win10-JDE, Cao15} for instance.  Applying these heat Neumann semigroup estimates to the $v$-equation in \eqref{ks-asc-pp}, we have the following widely known 'reciprocal' lemma, cf. \cite[Lemma 4.1]{HW05}, \cite[Lemma 1]{KS08}, \cite[Lemma 3.5]{Xiangjde} for instance.

\begin{lemma}  \label{reciprocal-lem} For $p\geq 1$, let
\be\label{q-exp}
\begin{cases}
q\in [1, \frac{np}{n-p}), \quad & \text{if }  p\leq n,\\
q\in [1, \infty], \quad & \text{if }  p>n.
\end{cases}
\ee
Then there exists $C=C(p,q, v_0, \Omega)$ such that  the solution $(u,v)$ of \eqref{ks-asc-pp} satisfies
 \be\label{gradv-bdd-lp}
\| v(t)\|_{W^{1,q}} \leq C(1+\sup_{s\in(0,t)}\|u(s)\|_{L^p}).
\ee
 \end{lemma}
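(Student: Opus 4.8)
The plan is to combine the variation-of-constants representation of the $v$-equation with the well-known smoothing estimates for the Neumann heat semigroup; no information about $u$ beyond the finite quantity $M_p(t):=\sup_{s\in(0,t)}\|u(s)\|_{L^p}$ will enter. Write $(e^{t\Delta})_{t\ge0}$ for the Neumann heat semigroup on $\Omega$ and let $\lambda_1>0$ be the first nonzero Neumann eigenvalue of $-\Delta$ on $\Omega$. Since $v$ solves $v_t=\Delta v-v+u$ with $\partial_\nu v=0$, Duhamel's formula reads
\[
v(t)=e^{-t}e^{t\Delta}v_0+\int_0^t e^{-(t-s)}e^{(t-s)\Delta}u(s)\,ds,\qquad t\in(0,T_m),
\]
so by the triangle inequality it suffices to bound the $W^{1,q}$-norm of each of the two terms on the right.

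The first term is harmless: since $v_0\in W^{1,\infty}(\Omega)\hookrightarrow W^{1,q}(\Omega)$, the boundedness (indeed exponential decay, thanks to the factor $e^{-t}$) of the Neumann heat semigroup on $W^{1,q}(\Omega)$ — see \cite{Win10-JDE, Cao15} — gives $\|e^{-t}e^{t\Delta}v_0\|_{W^{1,q}}\le C\,\|v_0\|_{W^{1,q}}\le C(1+\|v_0\|_{W^{1,\infty}})$ with $C=C(q,\Omega)$, uniformly in $t$. For the Duhamel integral I would estimate its $L^q$-norm and the $L^q$-norm of its gradient separately, using the $L^p$--$L^q$ smoothing estimates
\[
\|e^{t\Delta}w\|_{L^q}\le C\bigl(1+t^{-\frac n2(\frac1p-\frac1q)_+}\bigr)e^{-\lambda_1 t}\|w\|_{L^p},\qquad
\|\nabla e^{t\Delta}w\|_{L^q}\le C\bigl(1+t^{-\frac12-\frac n2(\frac1p-\frac1q)_+}\bigr)e^{-\lambda_1 t}\|w\|_{L^p}
\]
valid for $t>0$ and $w\in L^p(\Omega)$, where $(x)_+:=\max\{x,0\}$ (for $q\le p$ one first passes through $L^p$ on the bounded domain $\Omega$). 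Combining these with the factor $e^{-(t-s)}$ and integrating over $s\in(0,t)$ gives
\[
\Bigl\|\int_0^t e^{-(t-s)}e^{(t-s)\Delta}u(s)\,ds\Bigr\|_{W^{1,q}}\le C\Bigl(\int_0^\infty\bigl(1+\sigma^{-\frac12-\frac n2(\frac1p-\frac1q)_+}\bigr)e^{-\sigma}\,d\sigma\Bigr)\,M_p(t),
\]
and adding the two contributions yields \eqref{gradv-bdd-lp} with $C=C(p,q,v_0,\Omega)$, once the $\sigma$-integral is seen to be finite.

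The only genuinely delicate point — and the place where the restriction \eqref{q-exp} is forced — is the convergence of that temporal integral. The exponential weight $e^{-\sigma}$ takes care of $\sigma\to\infty$; convergence at $\sigma=0$ requires $\tfrac12+\tfrac n2\bigl(\tfrac1p-\tfrac1q\bigr)_+<1$, i.e. $\tfrac1p-\tfrac1q<\tfrac1n$. When $p\le n$ this is exactly $q<\tfrac{np}{n-p}$; when $p>n$ it holds for every $q\in[1,\infty]$ since already $\tfrac1p<\tfrac1n$, which reproduces the dichotomy in \eqref{q-exp}. Apart from this, the argument is routine Duhamel bookkeeping; the main care needed is in quoting the $L^p$--$L^q$ heat-semigroup estimates in a form that covers the endpoint $q=\infty$ (used for the $W^{1,\infty}$-bound in Theorem \ref{bdd-2d}) and that carries the decaying exponential, which is what makes the bound uniform in $t$.
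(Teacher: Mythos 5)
Your argument is correct and is essentially the paper's own proof: the same Duhamel representation of $v$, the same $L^p$--$L^q$ Neumann heat semigroup estimates for the two terms (the $W^{1,\infty}$-regularity of $v_0$ handling the homogeneous part, the smoothing estimates with singularity $t^{-\frac12-\frac n2(\frac1p-\frac1q)}$ handling the integral term), and the same observation that convergence of the time integral at $\sigma=0$ is precisely what forces the exponent dichotomy \eqref{q-exp}. The paper leaves these computations as "one can easily infer"; you have simply written them out.
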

 \begin{proof}
Indeed, the variation-of-constant formula applied to the $v$ in \eqref{ks-asc-pp} gives
\be\label{v-var}
v(t)=e^{t(\Delta-1)}v_0+\int_0^te^{(t-s)(\Delta-1)}u(s)ds.
\ee
Now, the well-known  $L^p$-$L^q$ estimate for the heat Neumann semigroup guarantees, cf. \cite{Cao15, Win10-JDE},   for  $1\leq q\leq p\leq\infty$,   one can find $k_8,k_9, k_{10}>0$ such that
\be\label{sem-pro1}
\|e^{t\Delta}w\|_{L^p}\leq k_8\left(1+t^{-\frac{n}{2}(\frac{1}{q}-\frac{1}{p})}\right)\|w\|_{L^q}, \quad  \forall t>0
\ee
and
\be\label{sem-max}
\|\nabla e^{t\Delta} w\|_{L^q}\leq k_9\| \nabla w\|_{L^\infty}, \quad \forall t>0
\ee
as well as
\be\label{sem-lp-lq}
\|\nabla e^{t\Delta} w\|_{L^p}\leq k_{10}\left(1+t^{-\frac{1}{2}-\frac{n}{2}(\frac{1}{q}-\frac{1}{p})}\right)e^{-\lambda_1 t}\|w\|_{L^q}, \quad \forall t>0.
\ee
Here, $\lambda_1(>0)$ is the first nonzero eigenvalue of $-\Delta$ under homogeneous boundary condition.
Then applying the properties  \eqref{sem-pro1}, \eqref{sem-max} and \eqref{sem-lp-lq} to  \eqref{v-var} and the exponent relation $p,q$ in  \eqref{q-exp}, one can easily infer \eqref{gradv-bdd-lp}.
\end{proof}
\begin{lemma}  The $u$-component of the solution  of the KS minimal chemotxis-logistic model \eqref{ks-asc-pp} satisfies the uniform estimate
\be\label{ul3-est-rough}
 \|u(t)\|_{L^3}\leq C\Bigr[1+\frac{1}{\mu}+\frac{\chi^\frac{8}{3}}{\mu}M^\frac{8}{3}(\chi,\mu)E^\frac{8}{3}(\chi,\mu)\Bigr],
\ee
for all $t\in (0, \infty)$ and for some $C$ depending on $u_0,v_0, r$ and $|\Omega|$, where $M$ and $E$ are defined by \eqref{M-def} and \eqref{E-def}, respectively.
\end{lemma}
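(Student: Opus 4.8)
My plan is to derive a uniform $L^3$-bound for $u$ by a weighted energy estimate in which the logistic absorption $-\mu\|u\|_{L^4(\Omega)}^4$ is used to dominate the chemotactic cross term, feeding in the facts already available in the excerpt: the $L^1$-bound $\|u(t)\|_{L^1}\le k_1$ from Lemma \ref{ul1-vgradl2}, the uniform bound $\|u(t)\|_{L^2}\le CM(\chi,\mu)E(\chi,\mu)$ that follows from Lemma \ref{ul2-est} with $\tau=1$ (legitimate since global existence has already been secured in Remark \ref{global-time}), and the reciprocal Lemma \ref{reciprocal-lem}. First I would test the $u$-equation of \eqref{ks-asc-pp} against $u^2$ and integrate by parts; abbreviating $w:=u^{3/2}$, so that $\|w\|_{L^2}^2=\|u\|_{L^3}^3$ and $2\int_\Omega u|\nabla u|^2=\frac89\|\nabla w\|_{L^2}^2$, this produces
\[
\frac13\frac{d}{dt}\|w\|_{L^2}^2+\frac89\|\nabla w\|_{L^2}^2=\frac{4\chi}{3}\int_\Omega w\,\nabla w\cdot\nabla v+r\|w\|_{L^2}^2-\mu\|u\|_{L^4}^4 .
\]
Here the diffusion term $\|\nabla w\|_{L^2}^2$ and the damping term $-\mu\|u\|_{L^4}^4$ are the two ``good'' quantities to spend, and the chemotactic cross term is the only obstruction.

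To control the cross term I would first use Young's inequality (Lemma \ref{Y-epsilon}) to get $\frac{4\chi}{3}\int_\Omega w\,\nabla w\cdot\nabla v\le\frac49\|\nabla w\|_{L^2}^2+\chi^2\int_\Omega w^2|\nabla v|^2$, then H\"{o}lder's inequality to obtain $\chi^2\int_\Omega w^2|\nabla v|^2\le\chi^2\|w\|_{L^4}^2\|\nabla v\|_{L^4}^2$. The decisive step is the $2$-D Gagliardo-Nirenberg inequality (Lemma \ref{GN-inter}) applied with low exponent $\frac23$: it gives $\|w\|_{L^4}^2\le C(\|\nabla w\|_{L^2}^{5/3}\|w\|_{L^{2/3}}^{1/3}+\|w\|_{L^{2/3}}^2)$, where $\|w\|_{L^{2/3}}=\|u\|_{L^1}^{3/2}\le k_1^{3/2}$. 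A second use of Young's inequality (with exponents $\frac65$ and $6$) then absorbs the $\|\nabla w\|_{L^2}^{5/3}$-factor as well, so the diffusion term is entirely consumed, at the price of a constant $C\chi^{12}\|\nabla v\|_{L^4}^{12}k_1^3$ (plus the lower-order remainder $C\chi^2\|\nabla v\|_{L^4}^2k_1^3$). The factor $\|\nabla v\|_{L^4}$ is then bounded by Lemma \ref{reciprocal-lem} (with $q=4$ and, say, $p=\frac32$) together with $\|u(t)\|_{L^2}\le CME$ and H\"{o}lder's inequality, which gives $\|\nabla v(t)\|_{L^4}\le C(1+\|u(t)\|_{L^{3/2}})\le CM(\chi,\mu)E(\chi,\mu)$ uniformly in $t$.

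It remains to handle the linear-growth term $r\|w\|_{L^2}^2=r\int_\Omega u^3$: H\"{o}lder gives $\int_\Omega u^3\le|\Omega|^{1/4}\|u\|_{L^4}^3$, and Young's inequality (exponents $\frac43$ and $4$) bounds $r|\Omega|^{1/4}\|u\|_{L^4}^3$ by $\frac\mu2\|u\|_{L^4}^4+C\mu^{-3}r^4|\Omega|$, so the damping still dominates. Collecting and using the interpolation $\|u\|_{L^3}\le\|u\|_{L^1}^{1/9}\|u\|_{L^4}^{8/9}$ to rewrite the damping as $\mu\|u\|_{L^4}^4\ge\mu k_1^{-1/2}\|u\|_{L^3}^{9/2}$, I obtain, for $y(t):=\|u(t)\|_{L^3}^3$, a scalar differential inequality of the form $y'\le B-Ky^{3/2}$ with $K=c\,\mu k_1^{-1/2}$ and $B=C(\chi^{12}\|\nabla v\|_{L^4}^{12}k_1^3+\chi^2\|\nabla v\|_{L^4}^2k_1^3+\mu^{-3}r^4|\Omega|)$. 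Since the damping power exceeds $1$, a standard comparison argument yields $y(t)\le\max\{y(0),(B/K)^{2/3}\}$ for all $t\in(0,\infty)$, i.e. $\|u(t)\|_{L^3}\le\max\{\|u_0\|_{L^3},\,C(Bk_1^{1/2}/\mu)^{2/9}\}$.

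Finally I would insert $\|\nabla v\|_{L^4}\le CME$ and $k_1\le C(1+\frac1\mu)$ into $B$ and simplify: the leading contribution of $(Bk_1^{1/2}/\mu)^{2/9}$ is $C\chi^{24/9}(ME)^{24/9}k_1^{7/9}\mu^{-2/9}=C\chi^{8/3}M^{8/3}E^{8/3}k_1^{7/9}\mu^{-2/9}$, and since $k_1^{7/9}\mu^{-2/9}\le C(1+\frac1\mu)$ (once more by Young, using $k_1\le C(1+\frac1\mu)$), while the remaining pieces of $B$ contribute only lower-order terms in $\chi$ and $ME$, one is led exactly to \eqref{ul3-est-rough}. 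The step I expect to be the main obstacle is this final bookkeeping: the exponent $\frac83$ is not robust, it appears only because of three coordinated choices---taking the Gagliardo-Nirenberg low exponent to be the one controlled by $\|u\|_{L^1}$, performing the H\"{o}lder split so that the reciprocal lemma gets applied with $\|\nabla v\|_{L^4}$ (where the needed $L^p$-bound of $u$ is already available), and ending up with damping power $\frac32$ in the ODE---and one has to track how every Young step manufactures negative powers of $\mu$ and larger powers of $\chi$ and check that after the $\frac29$-th root imposed by the ODE they collapse precisely to $\chi^{8/3}\mu^{-1}M^{8/3}E^{8/3}$, with no singularity other than at $\mu=0$.
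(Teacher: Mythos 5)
Your strategy is genuinely different from the paper's and is mechanically sound up to the last step, but the final bookkeeping does not close: the bound you actually obtain is weaker than \eqref{ul3-est-rough} in the regime of large $\mu$. Concretely, your leading term is $C\chi^{8/3}M^{8/3}E^{8/3}\,k_1^{7/9}\mu^{-2/9}$ with $k_1\le C(1+\frac{1}{\mu})$, so $k_1^{7/9}\mu^{-2/9}\le C(1+\frac{1}{\mu})$ and you end up with $C\chi^{8/3}M^{8/3}E^{8/3}\bigl(1+\frac{1}{\mu}\bigr)$. This is \emph{not} dominated by $C\bigl[1+\frac{1}{\mu}+\frac{\chi^{8/3}}{\mu}M^{8/3}E^{8/3}\bigr]$ with $C$ independent of $\chi,\mu$: for fixed $\chi$ and $\mu\to\infty$ the stated right-hand side tends to a constant independent of $\chi$, whereas your bound retains the $\chi$-dependent term $\chi^{8/3}M^{8/3}E^{8/3}$. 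The structural reason is that in your scheme the logistic damping enters only through the coefficient $K\sim\mu k_1^{-1/2}$ of the superlinear term $-Ky^{3/2}$, and the comparison $y\le(B/K)^{2/3}$ converts this into merely $\mu^{-2/3}$ (hence $\mu^{-2/9}$ after the cube root), while the chemotaxis contribution $\chi^{12}\|\nabla v\|_{L^4}^{12}$ in $B$ carries no compensating negative power of $\mu$ because you absorbed the cross term entirely into the diffusion via Gagliardo--Nirenberg. Everything else checks out (the identities for $w=u^{3/2}$, the GN step with $q=\frac{2}{3}$, the use of Lemma \ref{reciprocal-lem} with $p=\frac{3}{2}$, $q=4$, and the ODE comparison), so what you prove is a uniform $L^3$ bound with the correct qualitative monotonicity in $\chi$ and $\mu$ and the single singularity at $\mu=0$ --- just not the precise inequality claimed.

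The paper's route is shorter and explains where the exact exponents come from. After the same testing by $u^2$ and the same first Young step, which leaves $\frac{\chi^2}{2}\int_\Omega u^3|\nabla v|^2$, it does \emph{not} interpolate: it applies Young with $\epsilon=\frac{\mu}{2}$ directly against the logistic absorption, $\frac{\chi^2}{2}u^3|\nabla v|^2\le\frac{\mu}{2}u^4+\frac{3^3\chi^8}{2\cdot4^4\mu^3}|\nabla v|^8$, and uses $ru^3-\frac{\mu}{2}u^4\le-\frac13u^3+\frac{3^3(r+\frac13)^4}{2^5\mu^3}$ to arrive at the \emph{linear} ODI $\frac{d}{dt}\int_\Omega u^3+\int_\Omega u^3\le C\frac{\chi^8}{\mu^3}\|\nabla v\|_{L^8}^8+C\mu^{-3}$. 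Since Lemma \ref{reciprocal-lem} with $p=2$ (fed by the $L^2$ bound \eqref{ul2-est-rough}) gives $\|\nabla v\|_{L^q}\le CME$ for \emph{every} finite $q$, taking $q=8$ and integrating the Gronwall inequality yields $\|u\|_{L^3}^3\le\|u_0\|_{L^3}^3+C\frac{\chi^8}{\mu^3}M^8E^8+C\mu^{-3}$, and the cube root produces exactly $\frac{\chi^{8/3}}{\mu}M^{8/3}E^{8/3}$. In short: spending the damping $\mu u^4$ (rather than the diffusion) to absorb the chemotaxis term is what attaches the full factor $\mu^{-3}$ to $\chi^8$ and makes the stated $\mu$-dependence come out. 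If you want to salvage your argument, replace your GN absorption of $\chi^2\|w\|_{L^4}^2\|\nabla v\|_{L^4}^2$ by this Young-against-$\mu u^4$ step; otherwise your proof should claim only the weaker bound.
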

\begin{proof}
Based on the uniform $L^2$-bound of $u$ in  \eqref{ul2-est-rough} with $\tau=1$,  it follows from Lemma \ref{reciprocal-lem} with $n=2$ and $p=2$, for any $1<q<\infty$, that
 \be\label{gradv-bdd-lq2}
\|\nabla v(t)\|_{L^q} \leq  CM(\chi,\mu)E(\chi,\mu).
\ee
Multiplying the $u$-equation in \eqref{ks-asc-pp} by $u^2$, integrating by parts and using Young's inequality with epsilon, we arrive at
\begin{align*}
&\frac{1}{3}\frac{d}{dt} \int_\Omega u^3+2\int_\Omega  u |\nabla u|^2\\
&=2\chi \int_\Omega u^2\nabla u\nabla v+\int_\Omega (ru^3-\mu u^4)\\
& \leq 2\int_\Omega  u |\nabla u|^2+\frac{ \chi^2}{2} \int_\Omega u^3|\nabla v|^2+ \int_\Omega (ru^3-\mu u^4)\\
&\leq 2\int_\Omega  u |\nabla u|^2+\frac{\mu}{2}\int_\Omega u^4+\frac{3^3 \chi^8}{2\cdot 4^4\mu^3} \int_\Omega |\nabla v|^8+ \int_\Omega (ru^3-\mu u^4), \end{align*}
 which along with the algebraic fact $ru^3-\frac{\mu}{2}u^4\leq -\frac{1}{3}u^3+\frac{3^3(r+\frac{1}{3})^4}{2^5\mu^3}$ shows that
$$\frac{d}{dt} \int_\Omega u^3+ \int_\Omega u^3 \leq \frac{3^4 \chi^8}{2\cdot 4^4\mu^3}\|\nabla v\|_{L^8}^8+\frac{3^4(r+\frac{1}{3})^4}{2^5\mu^3}|\Omega|.
$$
Solving this standard Gronwall differential inequality, we  directly have
$$
\|u\|_{L^3}^3\leq \|u_0\|_{L^3}^3+\frac{3^4 \chi^8}{2\cdot 4^4\mu^3}\sup_{t\in (0, \infty)}\|\nabla v(t)\|_{L^8}^8+\frac{3^4(r+\frac{1}{3})^4}{2^5\mu^3}|\Omega|,
$$
which together with \eqref{gradv-bdd-lq2} with $q=8$ yields the desired estimate \eqref{ul3-est-rough}.
\end{proof}

\begin{proof}[Proof of  Theorem \ref{bdd-2d}]  The $W^{1,\infty}$-bound of $v$ in \eqref{v-w1infty2} follows directly from the uniform $L^3$-estimate of $u$ in \eqref{ul3-est-rough} and Lemma \ref{reciprocal-lem} with $(n,p,q)=(2,3,\infty)$.

For the $L^\infty$-bound of $u$, we first apply the variation-of-constants formula to the $u$-equation in \eqref{ks-asc-pp} to represent  $u$  as
\be\label{u-vcf2}
\begin{split}
u(t)&=e^{t(\Delta-1) }u_0-\chi\int_0^te^{(t-s)(\Delta-1) }\nabla \cdot((u\nabla v)(s))ds \\
  &\quad +\int_0^te^{(t-s)(\Delta-1) }[(r+1)u(s)-\mu u^2(s)]ds\\
  &=:u_1(t)+u_2(t)+u_3(t).
  \end{split}
\ee
Because $u$ is nonnegative and smooth, we thus have
$$
\|u(t)\|_{L^\infty}=\sup_{x\in \Omega}u(x,t)\leq \sup_{x\in \Omega}u_1(x,t)+\sup_{x\in \Omega}u_2(x,t)+\sup_{x\in \Omega}u_3(x,t).
$$
 Thanks to the the maximum principle, the Neumann heat semigroup $(e^{t\Delta })_{t\geq 0}$  is order preserving. This allows us to control $u_1$ and  $u_3$ as follows:
 \be\label{u1-est}
\|u_1(t)\|_{L^\infty}=\|e^{t(\Delta-1) }u_0\|_{L^\infty}\leq e^{-t}\|u_0\|_{L^\infty}\leq \|u_0\|_{L^\infty}.
\ee
as well as
 \be\label{u3-est}
 \begin{split}
 u_3(t)&=\int_0^te^{-(t-s)}e^{(t-s)\Delta}[(r+1)u(s)-\mu u^2(s)]ds\\
 &\leq \int_0^te^{-(t-s) } e^{(t-s)\Delta }\frac{(r+1)^2}{4\mu}ds\leq \frac{(r+1)^2}{4\mu}.
 \end{split}
 \ee
To estimate $u_2$, we recall one more property of the Neumann heat semigroup  $e^{t\Delta}$, cf. \cite{Cao15,Win10-JDE}: for any  $1\leq q\leq p\leq \infty$, there exists $k_{11}>0$ such that
\be\label{sem-pro2}
\|e^{t\Delta}\nabla\cdot w\|_{L^p}\leq k_{11}\left(1+t^{-\frac{1}{2}-\frac{n}{2}(\frac{1}{q}-\frac{1}{p})}\right)e^{-\lambda_1 t}\|w\|_{L^q}, \forall t>0, w\in (W^{1,p})^n.
\ee
Using the definition of $u_2$ in \eqref{u-vcf2},  \eqref{sem-pro2} with $n=2$ and H\"{o}lder  interpolation inequality, we deduce that
\begin{align*}
\|u_2(t)\|_{L^\infty}&\leq  \chi\int_0^t \|e^{(t-s)(\Delta-1)} \nabla\cdot(u( s)\nabla v( s))\|_{L^\infty}ds \\
&\leq  k_{11}\chi\int_0^t (1+(t-s)^{-\frac{1}{2}-\frac{2}{5}})e^{-(\lambda_1+1) (t-s)}\|u(s)\nabla v(s)\|_{L^\frac{5}{2}}ds\\
&\leq k_{11}\chi\int_0^t (1+(t-s)^{-\frac{1}{2}-\frac{2}{5}})e^{-(\lambda_1+1) (t-s)} \|u(s)\|_{L^3}\|\nabla v(s)\|_{L^{15}}ds\\
&\leq k_{11}\chi \sup_{s\in(0,\infty)}[\|u(s)\|_{L^3}\|\nabla v(s)\|_{L^{15}}] \int_0^\infty  (1+\sigma^{-\frac{9}{10}})e^{-(\lambda_1+1) \sigma} d\sigma \\
&=:k_{12}\chi \sup_{s\in(0,\infty)}\|u(s)\|_{L^3}\sup_{s\in(0,\infty)}\|\nabla v(s)\|_{L^{15}}.
\end{align*}
This in conjunction with \eqref{ul3-est-rough} and  \eqref{gradv-bdd-lq2} with $q=15$ gives the estimate of $u_2$:
\be\label{u2-est}
\|u_2(t)\|_{L^\infty}\leq C\chi M(\chi,\mu)E(\chi,\mu) \Bigr[1+\frac{1}{\mu}+\frac{\chi^\frac{8}{3}}{\mu}M^\frac{8}{3}(\chi,\mu)E^\frac{8}{3}(\chi,\mu)\Bigr].
\ee
A substitution of \eqref{u1-est},  \eqref{u3-est} and \eqref{u2-est}  into \eqref{u-vcf2} yields the desired uniform bound for $\|u(t)\|_{L^\infty}$ as stated in \eqref{u-infty2}.
\end{proof}

\textbf{Acknowledgments}   The research of H.Y. Jin was supported by Project Funded by  NSF of China (No. 11501218). The research of  T. Xiang  was  funded by the NSF of China (No. 11601516,   11571364 and 11571363).

\end{document}